\documentclass[11pt,epsf]{article}
\usepackage{graphicx}
\usepackage{amsthm}
\usepackage{amsfonts}
\usepackage{amssymb}
\headsep 0cm
\headheight 0cm
\topmargin 0cm
\evensidemargin 0cm
\oddsidemargin 0cm
\textheight 23cm
\textwidth 16cm
\unitlength 1mm
\title{Uniform bounds on locations of zeros of partial theta function}
\author{Vladimir Petrov Kostov\\ 
Universit\'e de Nice, 
Laboratoire de Math\'ematiques, Parc Valrose,\\ 06108 Nice Cedex 2, France,  
e-mail: kostov@math.unice.fr} 
\date{}
\bibliographystyle{plain} 
\newtheorem{tm}{Theorem}

\newtheorem{rems}[tm]{Remarks}
\newtheorem{lm}[tm]{Lemma}

\newtheorem{nota}[tm]{Notation}
\begin{document} 
\maketitle 
\begin{abstract}
We consider the partial theta 
function $\theta (q,z):=\sum _{j=0}^{\infty}q^{j(j+1)/2}z^j$, where  
$(q,z)\in \mathbb{C}^2$, $|q|<1$. We show that for any 
$0<\delta _0<\delta <1$, there exists $n_0\in \mathbb{N}$ such that 
for any $q$ with $\delta _0\leq |q|\leq \delta$ and for any 
$n\geq n_0$ the function $\theta$ has exactly $n$ 
zeros with modulus $<|q|^{-n-1/2}$ counted with multiplicity.\\   

{\bf Keywords:} partial theta function; Rouch\'e theorem; spectrum\\  

{\bf AMS classification:} 26A06
\end{abstract}

\section{Introduction}

We consider the bivariate series $\theta (q,z):=\sum _{j=0}^{\infty}q^{j(j+1)/2}z^j$, 
where $(q,z)\in \mathbb{C}^2$, $|q|<1$. This series defines a {\em partial 
theta function}. The terminology is explained by the fact that the Jacobi 
theta function is defined by the series $\sum _{j=-\infty}^{\infty}q^{j^2}z^j$ and 
the following equality holds true: 
$\theta (q^2,z/q)=\sum _{j=0}^{\infty}q^{j^2}z^j$. The word ``partial'' 
is justified by the summation in $\theta$ ranging from $0$ to $\infty$ and 
not from $-\infty$ to $\infty$. In what follows we consider $z$ as a 
variable and $q$ as a parameter. For each fixed value of the parameter $q$ 
the function $\theta$ is an entire function in the variable $z$.

The function $\theta$ finds applications in various domains, such as 
statistical physics and combinatorics (see \cite{So}), 
Ramanujan type $q$-series (see \cite{Wa}), the theory 
of (mock) modular forms (see \cite{BrFoRh}), asymptotic analysis 
(see \cite{BeKi}), and also in problems concerning 
real polynomials in one variable 
with all roots real (such polynomials are called {\em hyperbolic}, 
see \cite{Ha}, \cite{Hu}, \cite{Pe}, \cite{Ost}, 
\cite{KaLoVi}, \cite{KoSh} and \cite{Ko2}). Other facts about $\theta$ can be 
found in~\cite{AnBe}.

The zeros of $\theta$ depend on the parameter $q$. For some values of $q$ 
(called {\em spectral}) confluence of zeros occurs, so it would 
be correct to regard the zeros as multivalued functions of $q$;  
about the spectrum of 
$\theta$ see \cite{KoSh}, \cite{Ko8} and \cite{Ko9}. 

\begin{nota}
{\rm We denote by $\mathbb{D}_{\rho }$ the open disk in the $q$-space 
centered at $0$ and of radius $\rho$, by $\mathcal{C}_{\rho }$ the corresponding 
circumference, and by $A_{\delta _0,\delta }$ the closed annulus 
$\{ q\in \mathbb{C}\, |\, \delta _0\leq |q|\leq \delta \}$.}
\end{nota}
In the present paper we prove the following theorem:

\begin{tm}\label{maintm}
For any couple of numbers $(\delta _0,\delta )$ such that 
$0<\delta _0<\delta <1$, there exists $n_0\in \mathbb{N}$ such that 
for any $q\in A_{\delta _0,\delta }$ and for any 
$n\geq n_0$ the function $\theta$ has exactly $n$ 
zeros in $\mathbb{D}_{|q|^{-n-1/2}}$ counted with multiplicity.
\end{tm}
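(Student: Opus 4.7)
My plan is to rescale so that the critical radius becomes the unit circle, then compare the transformed function with a bilateral Jacobi-type theta whose winding along $|u|=1$ can be read off from the triple product formula. Concretely, substitute $z = u\,q^{-n-1/2}$; a direct computation gives
$$q^{n^2/2}\,\theta(q,u\,q^{-n-1/2}) \;=\; G_n(q,u) \;:=\; \sum_{j=0}^{\infty} q^{(j-n)^2/2}\,u^j,$$
so the theorem becomes equivalent to the statement that $G_n(q,\cdot)$ has exactly $n$ zeros in $\mathbb{D}_1$ for all $q\in A_{\delta_0,\delta}$ and $n\geq n_0$. On $|u|=1$ the $j$-th coefficient has modulus $|q|^{(j-n)^2/2}$, which is Gaussian-shaped in $j$ and peaks at $j=n$.

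Complete the sum to $\mathbb{Z}$ and invoke the Jacobi triple product:
$$\widetilde G_n(q,u) \;:=\; \sum_{j\in\mathbb{Z}} q^{(j-n)^2/2}\,u^j \;=\; u^n\,\Theta(q,u),$$
$$\Theta(q,u) \;=\; \sum_{k\in\mathbb{Z}} q^{k^2/2} u^k \;=\; \prod_{k\geq 1}(1-q^k)(1+u q^{k-1/2})(1+u^{-1} q^{k-1/2}).$$
For $|u|=1$ and $|q|\leq\delta<1$, each factor $(1+uq^{k-1/2})$ or $(1+u^{-1}q^{k-1/2})$ parametrizes a small loop around $1$ of radius $|q|^{k-1/2}<1$, so it misses the origin and contributes winding number $0$. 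Hence $\Theta(q,\cdot)$ has winding $0$ along $|u|=1$ and $\widetilde G_n$ has winding exactly $n$. The same factorization shows the zeros of $\Theta$ lie on the circles $|u|=|q|^{\pm(k-1/2)}$, $k\geq 1$, uniformly separated from $|u|=1$ over $A_{\delta_0,\delta}$; by compactness there is $c=c(\delta_0,\delta)>0$ with $|\Theta(q,u)|\geq c$ on $A_{\delta_0,\delta}\times\{|u|=1\}$.

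The missing terms are tiny: for $|u|=1$,
$$|\widetilde G_n(q,u)-G_n(q,u)| \;\leq\; \sum_{\ell\geq 1} |q|^{(n+\ell)^2/2} \;=\; O\bigl(\delta^{(n+1)^2/2}\bigr),$$
uniformly in $(q,u)\in A_{\delta_0,\delta}\times\{|u|=1\}$. Choose $n_0$ so large that this upper bound stays strictly below $c$ for all $n\geq n_0$. Then the straight-line homotopy $(1-s)\widetilde G_n + s\,G_n$, $s\in[0,1]$, never crosses the origin on $|u|=1$, so $G_n$ inherits the winding number $n$ of $\widetilde G_n$. Since $G_n$ is entire, the argument principle identifies this winding with the exact number of zeros of $G_n$ in $\mathbb{D}_1$, which finishes the proof. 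The main conceptual obstacle is producing both the winding and the uniform lower bound $|\Theta|\geq c$ simultaneously; both are furnished at once by the Jacobi triple product together with the compactness of $A_{\delta_0,\delta}\times\{|u|=1\}$, and everything else is routine estimation.
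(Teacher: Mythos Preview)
Your proof is correct and follows a genuinely different route from the paper's. Both arguments are Rouch\'e-type comparisons on the circle $|z|=|q|^{-n-1/2}$, but with different comparison functions. The paper compares $\theta$ to a scalar multiple of the one-sided product $u(q,z)=\prod_{\nu\ge1}(1+q^{\nu}z)$, which is entire in $z$ and has exactly the zeros $-q^{-\ell}$; establishing $|u-\theta/(q;q)_n|<|u|$ on the circle requires splitting the coefficient difference into three ranges ($j>n$, $j\le n-m$, $n-m<j<n$) and a separate combinatorial lemma majorizing the relevant series. You instead rescale to $|u|=1$, complete the sum to the bilateral Jacobi theta $\Theta$, and read off both the winding number and a uniform lower bound for $|\Theta|$ directly from the triple product; the discrepancy $\widetilde G_n-G_n$ is then a single Gaussian tail $\sum_{m>n}|q|^{m^{2}/2}$. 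This is shorter, avoids the auxiliary lemma, and in fact yields an $n_0$ depending only on $\delta$: the explicit product bound $|\Theta(q,u)|\ge\prod_{k\ge1}(1-\delta^{k})(1-\delta^{k-1/2})^{2}$ already makes the compactness appeal unnecessary.

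Two small points are worth making explicit in your write-up. First, half-integer powers of $q$ appear throughout (in $q^{-n-1/2}$, $q^{k-1/2}$, $q^{(j-n)^{2}/2}$), so you should fix a square root of $q$ once and for all, or equivalently work with a parameter $p$ satisfying $p^{2}=q$ on the annulus $\sqrt{\delta_0}\le|p|\le\sqrt{\delta}$; neither the lower bound nor the tail estimate depends on the choice. Second, $\widetilde G_n=u^{n}\Theta$ has an essential singularity at $u=0$, so you are (correctly) transferring the zero count to $G_n$ via equality of winding numbers along $|u|=1$ rather than by the naive form of Rouch\'e; since $G_n$ is entire and nonvanishing on $|u|=1$, the argument principle then gives exactly $n$ zeros in $\mathbb{D}_1$.
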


\begin{rems}\label{remarks}
{\rm (1) The proof of the theorem 
is based on a comparison between $\theta$ and the function 
\begin{equation}\label{functionu}
u(q,z):=\prod _{\nu =1}^{\infty}(1+q^{\nu}z)
\end{equation}
We use the equality 
\begin{equation}\label{equalityu}
u=\sum _{j=0}^{\infty}q^{j(j+1)/2}z^j/(q;q)_j~,
\end{equation} 
where $(q;q)_j:=(1-q)(1-q^2)\cdots (1-q^j)$ is the 
$q$-Pochhammer symbol; it follows directly 
from Problem I-50 of \cite{PoSz} (see pages 9 and 186 of \cite{PoSz}). 
The analog of the above theorem for the 
{\em deformed exponential function} $\sum _{j=0}^{\infty}q^{j(j+1)/2}z^j/j!$ 
is proved in a non-published text by A.~E.~Eremenko using a different method.

(2) For $q$ close to $0$ the zeros of $\theta$ 
are of the form $-q^{-\ell}(1+o(1))$, $\ell \in \mathbb{N}$, 
see more details about 
this in \cite{Ko4}, \cite{Ko6} and \cite{Ko7}.}
\end{rems}

{\bf Acknowledgement.} The author has discussed (electronically and directly) 
questions concerning the partial theta and the deformed exponential function 
with A.~Sokal, A.~E.~Eremenko, B.~Z.~Shapiro, I.~Krasikov 
and J.~Forsg{\aa}rd to 
all of whom he expresses his most sincere gratitude.

\section{Proofs}

\begin{proof}[Proof of Theorem~\ref{maintm}]
It is shown in \cite{Ko4} that for $0<|q|\leq 0.108$ the zeros of $\theta$ 
can be expanded in convergent Laurent series. 
%Therefore no escape to 
%infinity of zeros can occur for these values of $q$. 
%In the present proof we assume that $0<\delta _0<|q|<\delta <1$, 
%where $\delta _0$ 
%and $\delta$ are fixed. 
Recall that the function $u$ (defined by (\ref{functionu})) satisfies equality 
(\ref{equalityu}), i.e. the zeros of $u$ 
are the numbers $-q^{-\ell}$, $\ell \in \mathbb{N}$. 
We show that 
for $n\in \mathbb{N}$ sufficiently large the functions $u$ and $\theta$ 
have one and the same number of zeros in the open 
disk $\mathbb{D}_{|q|^{-n-1/2}}$. 
To this end we show that for the 
restrictions $u^0$ and $\theta ^0$ 
of $u$ and $\theta$ to the circumference $\mathcal{C}_{|q|^{-n-1/2}}$ 
one has $|u^0-\theta ^0/(q;q)_n|<|u^0|$ after which 
we apply the Rouch\'e theorem. 

For $0<|q|\leq 0.108$ 
one can establish a bijection between the zeros of $\theta$ and $u$, because 
their 
$\ell$th zeros are of the form $-q^{-\ell}(1+o(1))$ and the moduli of the zeros 
increase with $\ell$, see part (2) of Remarks~\ref{remarks}. 
%Hence for any $\delta \in (0,1)$ 
%one can find $n_0\in \mathbb{N}$ such that for $n\geq n_0$ 
%the number of zeros of $\theta$ and $u$ 
%belonging to $\mathbb{D}_{|q|^{-n-1/2}}$ is the same. As $\delta \in (0,1)$ is 
%arbitrary, no zero of $\theta$ escapes 
%to infinity for $0<|q|<1$. 

Set $P_k(|q|):=\prod_{\ell =0}^{k}(1-|q|^{\ell +1/2})$, $k\in \mathbb{N}\cup \infty$. 
For $|u^0|$ one obtains the estimation 

\begin{equation}\label{equ0}
|u^0|\geq |q|^{-n^2/2}P_{n-1}(|q|)P_{\infty}(|q|)>
|q|^{-n^2/2}(P_{\infty}(|q|))^2\geq |q|^{-n^2/2}(P_{\infty}(\delta ))^2~.
\end{equation} 
Indeed, for $|z|=|q|^{-n-1/2}$ one can set $z:=|q|^{-n-1/2}\omega$, $|\omega |=1$. 
For $1\leq \nu \leq n$ (resp. for $\nu >n$), 
the factor $(1+q^{\nu}z)$ in (\ref{functionu}) 
is of the form $(1-|q|^{-\ell -1/2}\omega _{\ell})$, 
where $\ell =n-\nu$ and $|\omega _{\ell}|=1$ (resp. of the form 
$(1-|q|^{\ell +1/2}\omega ^*_{\ell})$, where $\ell =\nu -n-1$ and 
$|\omega ^*_{\ell}|=1$). Thus 

$$
u(q,|q|^{-n-1/2}\omega ^{-n-1/2})=
\prod_{\ell =0}^{n-1}(1-|q|^{-\ell -1/2}\omega _{\ell})
\prod _{\ell =0}^{\infty}(1-|q|^{\ell +1/2}\omega ^*_{\ell})~.$$
The first of the factors in the right-hand side can be represented in the form 
$|q|^{-n^2/2}\tilde{\omega}\prod_{\ell =0}^{n-1}(1-|q|^{\ell +1/2}\omega ^{**}_{\ell})$ 
with $|\tilde{\omega}|=|\omega ^{**}_{\ell}|=1$. Therefore

$$u(q,|q|^{-n-1/2}\omega ^{-n-1/2})=
|q|^{-n^2/2}\tilde{\omega}\prod_{\ell =0}^{n-1}(1-|q|^{\ell +1/2}\omega ^{**}_{\ell})
\prod _{\ell =0}^{\infty}(1-|q|^{\ell +1/2}\omega ^*_{\ell})~.
$$  
The modulus of the 
right-hand side is minimal for $\omega ^*_{\ell}=\omega ^{**}_{\ell}=1$ 
in which case one obtains the leftmost inequality in (\ref{equ0}).

Consider the monomial $\beta _j:=\alpha _jz^j$ in the 
series $u-\theta /(q;q)_n$. 
Hence for $j=n$ it vanishes and for $j>n$ one has 

$$\begin{array}{ccl}\alpha _j&=&q^{j(j+1)/2}(1/(q;q)_j-1/(q;q)_n)=
q^{j(j+1)/2}U_{j,n}~~,~~{\rm where}\\ \\ 
U_{j,n}&:=&(1-\prod_{\ell =n+1}^{j}(1-q^{\ell}))/(q;q)_j~,\end{array}$$
so for $|z|=|q|^{-n-1/2}$ 
one has $|\beta _j|=|q|^{-n^2/2+(j-n)^2/2}|U_{j,n}|$. 
One can observe that $U_{j,n}=q^{n+1}+O(q^{n+2})$. 
Set
$$U_{j,n}:=\sum _{\nu \geq n+1}u_{j,n;\nu }q^{\nu}~~{\rm and}~~ 
U:=((\prod_{\ell =1}^{\infty}(1+q^{\ell}))-1)/(q;q)_{\infty}=
\sum _{\nu =1}^{\infty}u_{\nu}q^{\nu}~.$$ 
The Taylor series of $U$ converges for $|q|<1$ because the infinite products 
defining $U$ converge. Clearly 
$u_{j,n;\nu}\in \mathbb{Z}$, $u_{\nu}\in \mathbb{N}$ (because all coefficients of 
the series $1/(q;q)_j$ and $1/(q;q)_{\infty}$ are positive integers) 
and $u_{j,n;n+1}=u_1=1$.

The following lemma explains in what sense the series $U$ 
majorizes the series $U_{j,n}$.

\begin{lm}\label{lmU}
One has $|u_{j,n;n+\nu }|\leq u_{\nu}$, $\nu \in \mathbb{N}$. 
\end{lm}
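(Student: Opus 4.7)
The plan is to recognize $U_{j,n}$ combinatorially, after which the desired bound reduces to a short counting injection. Using the factorization $(q;q)_j=(q;q)_n\prod_{\ell=n+1}^{j}(1-q^\ell)$, the second term in the numerator of $U_{j,n}$ cancels one copy of $1/(q;q)_n$, yielding the clean identity
\[
U_{j,n}=\frac{1}{(q;q)_j}-\frac{1}{(q;q)_n}.
\]
Writing $1/(q;q)_m=\sum_{k\ge 0}p_m(k)\,q^k$, where $p_m(k)$ is the number of partitions of $k$ into parts of size at most $m$, this shows $u_{j,n;n+\nu}=p_j(n+\nu)-p_n(n+\nu)\ge 0$, so the absolute value sign in the lemma is actually superfluous. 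Moreover $u_{j,n;n+\nu}$ is the number of partitions of $n+\nu$ all of whose parts are $\le j$ and at least one of whose parts is $>n$. Enlarging the cap $j$ to infinity only increases the count, so it suffices to bound the number of partitions of $n+\nu$ whose largest part exceeds $n$.

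Next I would interpret $u_\nu$ in the same language. The standard product expansions $\prod_{\ell\ge 1}(1+q^\ell)=\sum_{m\ge 0}d(m)\,q^m$ (with $d(m)$ the number of partitions of $m$ into distinct parts) and $1/(q;q)_\infty=\sum_{k\ge 0}p(k)\,q^k$ give $u_\nu=\sum_{m=1}^{\nu}d(m)\,p(\nu-m)$; equivalently, $u_\nu$ counts ordered pairs $(A,B)$ with $A$ a non-empty partition of some $m\ge 1$ into distinct positive parts, $B$ an arbitrary partition, and $|A|+|B|=\nu$.

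The key step is then an injection from partitions $\pi$ of $n+\nu$ with largest part $n+a>n$ into such pairs: let $B$ be the partition of $\nu-a$ obtained by removing one copy of $n+a$ from $\pi$, and set $A=\{a\}$. Then $A$ is trivially a distinct-part partition, $|A|+|B|=a+(\nu-a)=\nu$, and $\pi$ is recovered from $(A,B)$ by adjoining the single part $n+a$ to $B$, which yields the injectivity. I do not anticipate a real obstacle: everything rests on the algebraic identity above and a piece of elementary partition bookkeeping. Note that the injection hits only pairs with $A$ a singleton, so the argument in fact yields the slightly stronger estimate $u_{j,n;n+\nu}\le\sum_{a\ge 1}p(\nu-a)\le u_\nu$, with the last inequality because each $d(m)\ge 1$.
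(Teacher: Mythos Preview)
Your proof is correct. The identity $U_{j,n}=1/(q;q)_j-1/(q;q)_n$ holds as you say, giving $u_{j,n;n+\nu}=p_j(n+\nu)-p_n(n+\nu)\ge 0$, and your injection (remove one copy of the largest part $n+a$ and record $A=\{a\}$) cleanly embeds these partitions into the pairs counted by $u_\nu$; the recovery step works even when $n+a$ occurs with multiplicity, since adding the part $n+a$ back to $B$ restores $\pi$ and $n+a$ is again the largest part.

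The paper proceeds differently. Rather than simplifying $U_{j,n}$, it majorizes it coefficientwise by $V_{j,n}:=\bigl(\prod_{\ell=n+1}^{j}(1+q^{\ell})-1\bigr)/(q;q)_j$ (replacing each $1-q^{\ell}$ in the numerator by $1+q^{\ell}$), then by $V_{\infty,n}$, and finally reduces to comparing the numerators $S_r:=\prod_{\ell>r}(1+q^{\ell})-1$ for $r=n$ and $r=0$: the coefficient $s_{n;n+\nu}$ counts partitions of $n+\nu$ into distinct parts all $>n$, while $s_{0;\nu}$ counts partitions of $\nu$ into distinct parts, and $s_{n;n+\nu}\le s_{0;\nu}$ follows from an easy injection. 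Your route is shorter and yields the extra information that the $u_{j,n;\nu}$ are nonnegative (so the absolute value in the lemma is superfluous) together with the sharper bound $u_{j,n;n+\nu}\le\sum_{a\ge1}p(\nu-a)$; the paper's route avoids spotting the telescoping identity but pays with an extra majorization layer.
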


Before proving Lemma~\ref{lmU} (the proof is given at the end of the paper) 
we continue the proof of Theorem~\ref{maintm}.

%(In this sense we say that the series $U$ 
%majorizes the series $U_{j,n}$.)

Set $R(|q|):=\sum _{j>n}|q|^{(j-n)^2/2}$. The following inequality results 
immediately from the lemma:

\begin{equation}\label{eqlmU}
Z_1:=\sum _{j>n}|\beta _j|\leq |q|^{-n^2/2}|q|^nU(|q|)
R(|q|)\leq |q|^{-n^2/2}\delta ^nU(\delta )R(\delta )~~.
\end{equation}
The first condition which we 
impose on the choice of $n$ is the 
following inequality to be fulfilled:

\begin{equation}\label{condition1}
\delta ^nU(\delta )R(\delta )<(P_{\infty}(\delta ))^2/4~.
\end{equation}

For $j<n$ and $|z|=|q|^{-n-1/2}$ one has $|\beta _j|=|q|^{-n^2/2+(j-n)^2/2}|\tilde{U}_{j,n}|$, where  

\begin{equation}\label{formula}
\tilde{U}_{j,n}:=(\prod_{\ell =j+1}^n(1-q^{\ell})-1)/(q;q)_n~.
\end{equation} 
Hence 
$|\tilde{U}_{j,n}|\leq T(|q|):=(\prod_{\ell =1}^{\infty}(1+|q|^{\ell})+1)/(|q|;|q|)_{\infty}$ and 

\begin{equation}\label{eqbeta}
|\beta _j|\leq |q|^{-n^2/2}|q|^{(j-n)^2/2}T(\delta )
\end{equation}
Choose $m\in \mathbb{N}$ such that 
$T(\delta )\sum _{s=m}^{\infty}\delta ^{s^2/2}\leq (P_{\infty}(\delta ))^2/4$. 
Inequality (\ref{eqbeta}) implies that 

\begin{equation}\label{eqbeta1}
Z_2:=\sum _{j=0}^{n-m}|\beta _j|\leq |q|^{-n^2/2}(P_{\infty}(\delta ))^2/4
\end{equation}
Notice that for $n<m$ the above sum is empty 
and the inequality trivially holds true. 

The finite sum 

\begin{equation}\label{Z3}
Z_3:=\sum _{j=n-m+1}^{n-1}|\beta _j|
\end{equation} is of the form 
$|q|^{-n^2/2}O(|q|^n)$. Indeed, consider formula (\ref{formula}).
There exists $M>0$ depending only on $\delta _0$ and $\delta$ such that 
$$0<|1/(q;q)_n|\leq 1/(|q|;|q|)_n<1/(|q|;|q|)_{\infty}\leq M~~{\rm for}~~\delta _0\leq |q|\leq \delta ~.$$ 
Thus 
$$|\tilde{U}_{j,n}|\leq M(\prod _{\ell =j+1}^n(1+|q|^{\ell})-1)~.$$
The index $j$ can take only the values $n-m+1$, $\ldots$, $n-1$. 
In the last product each monomial 
$|q|^{\ell}$ can be represented in the form $|q|^n|q|^{\ell -n}$, 
where $\ell -n=2-m$, $\ldots$, $0$. 
The modulus of each factor $|q|^{\ell -n}$ 
is not larger than $1/\delta _0^{\max (0,m-2)}$. 
Therefore 
$$|\tilde{U}_{j,n}|\leq M((1+|q|^n/\delta _0^{\max (0,m-2)})^{m-1}-1)=O(|q|^n)~.$$
The sum $Z_3$ (see (\ref{Z3})) can be 
made less than 
$|q|^{-n^2/2}(P_{\infty}(\delta ))^2/4$ by choosing $n$ large enough. Thus 
inequalities (\ref{equ0}), (\ref{eqlmU}) and (\ref{eqbeta1}) yield 

$$|u^0-\theta ^0/(q;q)_n|\leq Z_1+Z_2+Z_3
\leq (3/4)|q|^{-n^2/2}(P_{\infty}(\delta ))^2<
|q|^{-n^2/2}(P_{\infty}(\delta ))^2
\leq |u^0|$$
which proves the theorem.
\end{proof}

\begin{proof}[Proof of Lemma~\ref{lmU}]
We first compare the coefficients of the series 

$$\prod_{\ell =p}^{r}(1+q^{\ell})-1=\sum _{\nu \geq p}\gamma^1 _{\nu}q^{\nu}~~ 
{\rm and}~~\prod_{\ell =p}^{r}(1-q^{\ell})-1=
\sum _{\nu \geq p}\gamma^2 _{\nu}q^{\nu}~~,~~p\leq r~.$$ 
They are obtained respectively as a sum of the non-negative coefficients 
of monomials and as a linear 
combination of the same coefficients some 
of which are taken with the $+$ and the rest with the $-$ sign. 
Therefore $\gamma ^1_{\nu}\geq |\gamma^2_{\nu}|$, 
$\nu \geq p$. This means that $|u_{j,n;\nu}|\leq v_{j,n;\nu}\leq v_{\infty ,n;\nu}$, 
where

$$V_{j,n}:=(\prod_{\ell =n+1}^{j}(1+q^{\ell})-1)/(q;q)_j=
\sum _{\nu \geq n+1}v_{j,n;\nu }q^{\nu}~~,~~V_{\infty ,0}=U~~{\rm and}~~v_{\infty ,0;\nu}=
u_{\nu}~.$$
To prove the lemma it suffices to show that 

\begin{equation}\label{star}
v_{\infty ,n;n+\nu}\leq v_{\infty ,0;\nu}~.
\end{equation} 
Consider 
the series $S_{r}:=\prod_{\ell =r+1}^{\infty}(1+q^{\ell})-1=
\sum _{\nu \geq r+1}s_{r;\nu}q^{\nu}$ for $r=0$ and $r=n$. 
Compare the coefficients $s_{0;\nu}$ and $s_{n;n+\nu}$. 
The coefficient $s_{0;\nu}$ is 
equal to the number of ways in which $\nu$ can be represented as a sum of 
distinct natural numbers forming an increasing sequence 
whereas $s_{n;n+\nu}$ is the number of ways in which $n+\nu$ can be 
represented as a sum of distinct natural numbers $\geq n+1$ forming an 
increasing sequence. Clearly $s_{n;n+\nu}\leq s_{0;\nu}$. 
This implies inequality (\ref{star}) and the lemma, because 
one has $V_{\infty ,r}=S_r/(q;q)_{\infty}$ 
and the coefficients of the series $1/(q;q)_{\infty}$ are all positive.
\end{proof}

\end{document}